\newtheorem{theorem}{Theorem}[section]
\newtheorem{lemma}[theorem]{Lemma}
\theoremstyle{definition}
\newtheorem{definition}[theorem]{Definition}
\theoremstyle{remark}
\newtheorem*{remark*}{Remark}
\numberwithin{equation}{section}
\numberwithin{figure}{section}
\begin{document}

\title{An operator algebra  associated with a pair of intersecting manifolds}
\author{D.A.~Loshchenova, A.Yu.~Savin, B.Yu.~Sternin}
\date{}

\maketitle

\begin{abstract}
Given a pair of smooth  transversally intersecting manifolds in some ambient manifold, we construct an operator algebra
generated by pseudodifferential operators and the (co)boundary operators associated with
the submanifolds. We show that this algebra has 18 types of additive generators.
Then we define the symbols of the operators in this algebra and obtain the composition formula.
\end{abstract}


\section{Introduction}

Let $X_0$ be a closed smooth manifold  and $X_1$, $X_2$ be its submanifolds
of arbitrary dimensions with a nonempty intersection.
We consider a natural class of boundary value problems associated with this geometry.
Namely, we consider operators  with boundary conditions posed on   $X_1$ and $X_2$.
These problems were studied, e.g., in the works~\cite{Ster2,Ster5,ZeSt1}
(see also~\cite{SaSt32,SaSt35}), where the Fredholm property
for some problems of such type was obtained, index formulas were proved (these formulas
involve contributions of   $X_1$ and $X_2$ considered as strata of the manifold with singularities 
$X_1 \cup X_2$). Later these results and methods were applied to study nonlocal problems with boundary conditions
on a smooth submanifold (see~\cite{Linh1,Losh1}).

In the present paper, we study several algebraic aspects of this theory.
More precisely, we consider an operator algebra multiplicatively generated by  pseudodifferential operators ($\psi$DOs) on the ambient manifold and on the submanifolds, and the (co)boundary operators associated with the submanifolds
(by a boundary operator we mean the restriction operator to a submanifold,
and by a coboundary operator we mean its dual, that is, an operator which extends  functions
on a submanifold to  distributions on the ambient manifold). We show that this algebra has
18 types of additive generators, and  general elements in this algebra can be written  
as the following $3\times 3$ matrices  
\begin{equation}\label{eq:111}
\mathcal{D} = \left(\rule{0ex}{8ex}
    \begin{array}{ccc}
        D_0 + G_1 + G_2 + M_0 & C_1 + C_1^\prime & C_2 + C_2^\prime \vspace{2ex} \\
        B_1 + B_1^\prime & D_1 + M_1 & T_{12} \vspace{2ex} \\
        B_2 + B_2^\prime & T_{21} & D_2 + M_2 \\
    \end{array}
\right)
\colon \mathcal{H}  \longrightarrow \mathcal{H}',
\end{equation}
where $\mathcal{H},\mathcal{H}'$ stand for direct sums of Sobolev spaces on   $X_0,X_1,X_2$
(of some orders for each of the manifolds).
The entries in~\eqref{eq:111} are of the following types:
\begin{itemize}
\item $D_0,D_1,D_2$ are $\psi$DOs on $X_0$, $X_1$, $X_2$, respectively;
\item $B_1,B_2$ and $C_1,C_2$ are boundary and coboundary operators (see~\cite{Ster1}),
    localized at $X_1$, $X_2$;
\item $G_1,G_2$ are Green operators (see, e.g.,~\cite{StSh53-1,NaSt10,NaSt13}),
    localized at $X_1,X_2$;
\item $M_0,M_1,M_2$ are Mellin operators (see, e.g.,~\cite{SaSt35}),
    localized at $X_1 \cap X_2$;
\item $T_{12},T_{21}$ are translators (see~\cite{Ster5}),
    localized at $X_1 \cap X_2$;
\item $B'_1,B'_2$ and $C'_1,C'_2$ are boundary and coboundary operators,
    localized at the intersection $X_1 \cap X_2$.
\end{itemize}
We note that while these operators were considered in the literature individually,
our approach allows one to study them from a unified point of view,
and this considerably simplifies the theory.
Moreover, our classification of operators~\eqref{eq:111}
is carried out in terms of the strata 
on which these operators are localized at.
We also define symbols of these operators and establish the composition formula.

The work is partially supported by RFBR grants
\textnumero 16-31-00176 and 16-01-00373.

\section{Statement of the problem}

Let $X_0 = \mathbb{R}^n$ with the standard coordinates $x_1,\dotsc,x_n$.
We consider two coordinate subspaces $X_1, X_2 \subset X_0$
such that $\dim X_k < \dim X_0$, $k = 1,2$, and $\dim (X_1 \cap X_2) >0$.
We will take appropriate components of the coordinates $x_1,\dotsc,x_n$ on the ambient space
as coordinates on $X_k$, $k = 1,2$.

For $k=1,2$ denote $n_k = \operatorname{codim}_{X_k} (X_1 \cap X_2)$, $\nu_k = \operatorname{codim}_{X_0} X_k$.
Also denote $\nu_3 = \operatorname{codim}_{X_0} (X_1 \cap X_2)$.

We associate the following operators with the triple $(X_0, X_1, X_2)$:
\begin{enumerate}
\item Pseudodifferential operators ($\psi$DOs) on $X_0, X_1, X_2$:
\begin{equation}\label{eq:pdo}
A_k\colon H^s(X_k) \longrightarrow H^{s-m}(X_k), \quad k = 0,1,2.
\end{equation}
Hereinafter  we consider only $\psi$DOs~\eqref{eq:pdo} with compactly supported Schwartz kernels.

\item Elementary boundary operators corresponding to $X_k$, $k=1,2$:
\begin{equation}\label{eq:b}
i^k\colon H^s(X_0) \longrightarrow H^{s-\nu_k/2}(X_k),
\quad u(y,z) \longmapsto u(y, 0),
\quad s-\nu_k/2 > 0,
\end{equation}
where $(y,z)$ are the coordinates on $X_0$ such that $X_k = \{y = 0\}$.

\item Elementary coboundary operators corresponding to $X_k$, $k=1,2$:
\begin{equation}\label{eq:cob}
i_k\colon H^{-s+\nu_k/2}(X_k) \longrightarrow H^{-s}(X_0),
\quad u(z) \longmapsto u(z)\otimes \delta(y),
\quad s-\nu_k/2 > 0,
\end{equation}
where
$\delta(y)$ stands for the Dirac delta function, and the coordinates $(y,z)$ are chosen as above.
\end{enumerate}

Below we always assume that all operators act between Sobolev spaces of some fixed orders
(we will denote these spaces by $H(Z)$, where $Z \subset X_0$ is a manifold,
omitting the order from the notation). Moreover, for compositions $D_1 D_2$
we assume that the domain of $D_1$ is equal  to the range of $D_2$,
that is, the corresponding Sobolev spaces are the same.

Now, for any $k, l \in \{0,1,2\}$, consider the linear space $\operatorname{Mor}_{k,l}$
which consists of operators $H(X_l)\to H(X_k)$ of some fixed order
and is multiplicatively generated by the operators~\eqref{eq:pdo},
\eqref{eq:b}, and~\eqref{eq:cob}.
More precisely, an element of $\operatorname{Mor}_{k,l}$ is a finite sum of operators of the form
\begin{equation}\label{defeq:simpleMorElement}
\mathcal{D}_{kl} = D_{k,i_1} D_{i_1,i_2}\ldots D_{i_N,l} \colon H(X_l) \longrightarrow H(X_k),
\end{equation}
where $D_{\alpha,\beta}\colon H(X_\beta) \rightarrow H(X_\alpha)$
is a composition of $\psi$DOs
and the elementary boundary operator (when $X_\alpha \subset X_\beta$)
or the elementary coboundary operator (when $X_\beta \subset X_\alpha$).
Now denote by $\operatorname{Mor}$ the direct sum
\begin{equation}\notag 
\operatorname{Mor} = \bigoplus_{k,l = 0,1,2} \operatorname{Mor}_{k,l}.
\end{equation}
Elements of this space are called \textit{morphisms} (cf.~\cite{Ster2}).

Further, for simplicity we consider only morphisms of order zero in the spaces
\begin{equation}\label{defeq:cH:oplus}
\mathcal{H} = \bigoplus_{k = 0}^{2} H^{s_k}(X_k)
\end{equation}
for some fixed $s_k \in \mathbb{R}$.
By construction such morphisms form an algebra with respect to the operator composition
(we denote this algebra also by $\operatorname{Mor}$).

It is an interesting problem to study operators (morphisms) from the algebra $\operatorname{Mor}$.
More precisely, it is necessary to examine the structure of these operators,
define their symbols and the notion of ellipticity, establish the Fredholm property
of elliptic operators (the finiteness theorem) and obtain the index formula.

In this paper, we carry out a classification of morphisms, define their symbols
and determine the composition formula (the symbol homomorphism).
Fredholm property  and index theorem will be studied elsewhere.

\section{Classification of morphisms}

The union $X_1 \cup X_2$ is a stratified  manifold with singularities in $X_0$.
Let us classify the elements of $\operatorname{Mor}$ by means of the strata they are localized at.
First, we introduce the corresponding concept.

Note that the space $\mathcal{H}$ (see~\eqref{defeq:cH:oplus}) is a $C^\infty_c(X_0)$-module
with respect to the multiplication by functions in $C^\infty_c(X_0)$ and their
restrictions to $X_1$ and $X_2$.
\begin{definition}
A morphism $\mathcal{D}\colon \mathcal{H} \rightarrow \mathcal{H}$ is \textit{localized} at a submanifold $Z \subset X_0$
if compositions $\varphi \mathcal{D}$ or $\mathcal{D} \varphi$ are operators of order $\leq \operatorname{ord}\mathcal{D}$
for any $\varphi \in C^\infty_c(X_0 \setminus Z)$.
\end{definition}

One can easily show that the elementary boundary and coboundary operators
are localized at the submanifolds they are associated with. This implies the following
\begin{lemma}\label{lemma:simpleMorElementLocalized}
The composition~\eqref{defeq:simpleMorElement} is localized at the intersection
\begin{equation}\label{eq:simpleMorElementLocalizedCap}
X_{k} \cap X_{i_1} \cap \dotsb \cap X_{i_N} \cap X_{l}.
\end{equation}
\end{lemma}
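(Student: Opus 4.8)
The plan is to reduce the claim about the composition~\eqref{defeq:simpleMorElement} to the assertion about its individual factors $D_{\alpha,\beta}$, using two elementary observations: first, that each elementary factor is localized at the pair of manifolds it connects, and second, that localization behaves well under composition. More precisely, I would first establish the following auxiliary fact: if $\mathcal{D}' \colon H(X_\beta)\to H(X_\alpha)$ is localized at $Z' \subset X_0$ and $\mathcal{D}'' \colon H(X_\gamma)\to H(X_\beta)$ is localized at $Z'' \subset X_0$, then the composition $\mathcal{D}'\mathcal{D}''$ is localized at $Z' \cap Z''$. This is the natural ``product rule'' for localization and it is the real content of the lemma; everything else is bookkeeping.

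To prove this auxiliary fact, take $\varphi \in C^\infty_c(X_0 \setminus (Z' \cap Z''))$ and show that $\varphi \mathcal{D}' \mathcal{D}''$ (and symmetrically $\mathcal{D}'\mathcal{D}''\varphi$) has order $\le \operatorname{ord}(\mathcal{D}'\mathcal{D}'')$. The idea is to use a partition of unity subordinate to the open cover $X_0 \setminus Z'$, $X_0 \setminus Z''$ of $X_0 \setminus (Z'\cap Z'')$: write $\varphi = \varphi' + \varphi''$ with $\operatorname{supp}\varphi' \subset X_0 \setminus Z'$ and $\operatorname{supp}\varphi'' \subset X_0 \setminus Z''$. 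For the term $\varphi' \mathcal{D}' \mathcal{D}''$ we use localization of $\mathcal{D}'$ directly: $\varphi'\mathcal{D}'$ has order $\le \operatorname{ord}\mathcal{D}'$, hence so does its composition with $\mathcal{D}''$. For the term $\varphi'' \mathcal{D}'\mathcal{D}''$ we need to move $\varphi''$ past $\mathcal{D}'$; here I would insert a function $\psi'' \in C^\infty_c(X_0\setminus Z'')$ equal to $1$ near $\operatorname{supp}\varphi''$, write $\varphi'' \mathcal{D}' = \varphi'' \mathcal{D}' \psi'' + \varphi''\mathcal{D}'(1-\psi'')$, note that $\varphi''\mathcal{D}'(1-\psi'')$ has a negligible (low-order) kernel because $\operatorname{supp}\varphi''$ and $\operatorname{supp}(1-\psi'')$ are separated and $\mathcal{D}'$ is (modulo lower order) a $\psi$DO composed with (co)boundary operators whose singular support is on the diagonal of the relevant submanifold, and then use localization of $\mathcal{D}''$ on the factor $\psi''\mathcal{D}''$. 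The point where care is needed is that $\mathcal{D}'$ is not a $\psi$DO on a single manifold but a composition involving a boundary or coboundary operator, so ``separated supports give lower order'' must be checked against the explicit Schwartz kernels of~\eqref{eq:b} and~\eqref{eq:cob}; this is the main obstacle, though it is routine given the compact-support assumptions already imposed on the $\psi$DOs.

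With the product rule in hand, the lemma follows by induction on $N$: the composition~\eqref{defeq:simpleMorElement} is $D_{k,i_1}$ composed with $D_{i_1,i_2}\cdots D_{i_N,l}$, the first factor is localized at $X_k \cap X_{i_1}$ by the ``obvious'' statement quoted just before the lemma (elementary boundary/coboundary operators are localized at the submanifold they are associated with, and $\psi$DOs on $X_\alpha$ are localized at all of $X_\alpha$, which for a factor $D_{k,i_1}$ means localization at $X_k\cap X_{i_1}$ since that factor lives between $H(X_{i_1})$ and $H(X_k)$), and by the induction hypothesis the tail is localized at $X_{i_1}\cap\cdots\cap X_{i_N}\cap X_l$. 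Applying the product rule gives localization at the intersection~\eqref{eq:simpleMorElementLocalizedCap}, completing the proof.
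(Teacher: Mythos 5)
Your proposal is correct and follows essentially the route the paper intends: the paper offers no written proof beyond asserting that the elementary (co)boundary operators are localized at their associated submanifolds and that this ``implies'' the lemma, and your product rule for localization (proved by a partition of unity subordinate to $X_0\setminus Z'$, $X_0\setminus Z''$ together with pseudolocality of the factors) plus induction on $N$ is exactly the natural way to make that implication precise. The only caveat is that you, reasonably, read the paper's Definition of ``localized'' in its intended non-vacuous sense (compositions with $\varphi$ are of \emph{lower} order), since as literally stated the condition ``order $\leq\operatorname{ord}\mathcal{D}$'' would make the lemma trivial.
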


Lemma~\ref{lemma:simpleMorElementLocalized} allows us to classify
compositions~\eqref{defeq:simpleMorElement} into four classes,
according to the intersection~\eqref{eq:simpleMorElementLocalizedCap}:
this intersection is equal to one of the submanifolds $X_0, X_1, X_2$, and $X_1 \cap X_2$.
Let us examine the form of the operators from each of the four classes.

From now on, we represent the morphisms from $\operatorname{Mor}$ as $3\times 3$ matrix operators
acting in $\mathcal{H} \rightarrow \mathcal{H}$.

1. \textit{The intersection~\eqref{eq:simpleMorElementLocalizedCap} is equal to $X_0$.}
Clearly, this is the case if and only if
$$
X_{k} = X_{i_1} = \dotsb = X_{i_N} = X_{l} = X_0.
$$
Thus, we have a $\psi$DO on $X_0$. The corresponding matrix operator is
\begin{equation}\notag
\left(
    \begin{array}{ccc}
        D & 0 & 0 \\
        0 & 0 & 0 \\
        0 & 0 & 0 \\
    \end{array}
\right) \colon \mathcal{H} \longrightarrow \mathcal{H}.
\end{equation}

2. \textit{The intersection~\eqref{eq:simpleMorElementLocalizedCap} is equal to $X_1$.}
This is the case if and only if all the submanifolds in~\eqref{eq:simpleMorElementLocalizedCap}
are equal to either $X_0$ or $X_1$.
Thus, we deal with a composition of $\psi$DOs on $X_0$ and $X_1$ with (at least  one)
the elementary boundary or coboundary operators $i_1$, $i^1$.
The corresponding matrix operator is
\begin{equation}\notag
\left(
    \begin{array}{ccc}
        G_1 & C_1 & 0 \\
        B_1 & D_1 & 0 \\
        0 & 0 & 0 \\
    \end{array}
\right) \colon \mathcal{H} \longrightarrow \mathcal{H}.
\end{equation}
Here $D_1$ is always a $\psi$DO~on $X_1$ because the composition $i^1 A i_1$,
where $A$ is a $\psi$DO on $X_0$, is a $\psi$DO on $X_1$
(see~\cite{Ster1, NoSt1}).
Next, $B_1$ is  equal to
\begin{equation}\label{eq:pdoB}
    A_1 \, i^1 \, A_0,
\end{equation}
where $A_1$, $A_0$ are $\psi$DOs on $X_1$, $X_0$, respectively. Note that   ``long'' compositions
$$
    A_1 \, i^1 \,  A_0 \, i_1 \,  A_1^\prime \, i^1 \, A_0^\prime,
$$
where $A_1^\prime$, $A_0^\prime$ are $\psi$DOs on $X_1$, $X_0$, respectively,
can always   be represented as in~\eqref{eq:pdoB}, since, as noted above, $i^1 A_0 i_1$ is a $\psi$DO on $X_1$.
The operator $B_1$ is a \textit{boundary operator} localized at $X_1$. In a dual manner, $C_1$ is  
\begin{equation}\notag
    A_0 \, i_1 \, A_1.
\end{equation}
It is a \textit{coboundary operator} localized at $X_1$.
Finally, we have
$$
    G_1 = A_0 \, i_1 \, A_1 \, i^1 \, A_0^\prime,
$$
where $A_0,A_0^\prime$ are $\psi$DOs on $X_0$, and $A_1$ is a $\psi$DO on $X_1$.
This operator is called \textit{Green operator} localized at $X_1$.

3. \textit{The intersection~\eqref{eq:simpleMorElementLocalizedCap} is equal to $X_2$.}
This case is analogous to the previous one. One obtains matrix operators  
\begin{equation}\notag
\left(
    \begin{array}{ccc}
        G_2 & 0 & C_2 \\
        0 & 0 & 0 \\
        B_2 & 0 & D_2 \\
    \end{array}
\right) \colon \mathcal{H} \longrightarrow \mathcal{H},
\end{equation}
where $D_2$ is a $\psi$DO on $X_2$;
$B_2$ and $C_2$ are boundary and coboundary operators localized at $X_2$, respectively;
$G_2$ is a Green operator localized at $X_2$.

4. \textit{The intersection~\eqref{eq:simpleMorElementLocalizedCap} is equal to $X_1 \cap X_2$.}
In this case, one obtains matrix operators  
\begin{equation}\notag
\left(
    \begin{array}{ccc}
        M_0 & C_1^\prime & C_2^\prime \\
        B_1^\prime & M_1 & T_{12} \\
        B_2^\prime & T_{21} & M_2 \\
    \end{array}
\right) \colon \mathcal{H} \longrightarrow \mathcal{H}.
\end{equation}
Each term in this matrix is a composition with at least one (co)boundary  operator for $X_1$
and at least one (co)boundary   operator for $X_2$.
The operators $M_0$, $M_1$, $M_2$ are called \textit{Mellin operators} localized at $X_1 \cap X_2$;
the operators $B_1^\prime, B_2^\prime$ and $C_1^\prime, C_2^\prime$ are called
 boundary and coboundary operators localized at $X_1 \cap X_2$;
the operators $T_{12}$, $T_{21}$ are called  \textit{translators}
between $X_1$ and $X_2$ (see~\cite{Ster5}).

The operators of the above four classes constitute the set of
additive generators of the algebra $\operatorname{Mor}$.
Therefore, a general morphism is of the form
\begin{equation}\label{eq:generalMorphism}
\mathcal{D} = \left(\rule{0ex}{8ex}
    \begin{array}{ccc}
        D_0 + G_1 + G_2 + M_0 & C_1 + C_1^\prime & C_2 + C_2^\prime \vspace{2ex} \\
        B_1 + B_1^\prime & D_1 + M_1 & T_{12} \vspace{2ex} \\
        B_2 + B_2^\prime & T_{21} & D_2 + M_2 \\
    \end{array}
\right)
\colon \mathcal{H} \longrightarrow \mathcal{H}.
\end{equation}
It follows that the algebra $\operatorname{Mor}$ has $18$ types of additive generators.

\section{Symbols of morphisms}
\subsection{Symbols of $\psi$DOs and (co)boundary operators}
In this section, we define symbols of general morphisms in $\operatorname{Mor}$.
First, we define symbols of the generators on various submanifolds.

1. \textit{Symbols for $\psi$DOs.}
Let $A$ be a $\psi$DO on $X_0$ and $Z$ be a stratum in $X_0$.
Denote by $(y,z)$ the coordinates on $X_0$ such that $Z = \{y = 0\}$.
Denote by $(\eta,\zeta)$ the corresponding coordinates in the fibers of the bundle $T^*X_0$.
The principal symbol of $A$ is denoted by $A(y,z,\eta,\zeta)$.
Now, the \textit{symbol} $\sigma_Z(A)$ of $A$ on $Z$ is defined as the following operator-function
\begin{equation}\label{defeq:AZSmb}
\sigma_Z(A)(z,\zeta) = A\biggl(0,z,-i\frac{\partial }{\partial y},\zeta\biggr)\colon
H(\mathbb{R}^k_y) \longrightarrow H(\mathbb{R}^k_y),
\quad (z,\zeta) \in T^*_0Z.
\end{equation}
Note that~\eqref{defeq:AZSmb} is obtained by freezing the coefficients
of $A$ at a point on $Z$ and applying the Fourier transform with respect to the tangent variables.

The symbols of $\psi$DOs on $X_1$, $X_2$, $X_1 \cap X_2$ are defined similarly.

2. \textit{Symbols of elementary boundary operators.}
The \textit{symbol} $\sigma_{X_1}(i^1)$ of the operator $i^1$
on the stratum $X_1$ is the operator-function
\begin{equation}\notag
\sigma_{X_1}(i^1)(z,\zeta)\colon H(\mathbb{R}^{\nu_1}_y) \longrightarrow \mathbb{C},
\quad u(y) \longmapsto u(0),
\quad  (z,\zeta) \in T^*_0X_1,
\end{equation}
where $(z,y)$ are the coordinates on $X_0$ such that $X_1 = \{y = 0\}$.
To define the symbol of $i^1$ on the stratum $X_1 \cap X_2$,
we choose the local coordinates $(x,y,z)$ on $X_0$ such that
$X_1 = \{y = 0\}$, and $X_1 \cap X_2 = \{(x,y) = (0,0)\}$.
Then the \textit{symbol} $\sigma_{X_1 \cap X_2}(i^1)$ is the operator-function
\begin{equation}\notag
\sigma_{X_1 \cap X_2}(i^1)(z,\zeta)\colon H(\mathbb{R}_{x,y}^{\nu_3}) \longrightarrow H(\mathbb{R}_{x}^{n_1}),
\quad u(x,y) \longmapsto u(x,0),
\end{equation}
where $(z,\zeta) \in T^*_0(X_1 \cap X_2)$.

The symbols $\sigma_{X_2}(i^2)$ and $\sigma_{X_1 \cap X_2}(i^2)$ are defined along the same lines.

3. \textit{Symbols of coboundary operators.}
The symbol of the operator $i_1$ is defined in a dual manner to that of $i^1$.
More precisely, we set
\begin{equation}\notag
\sigma_{X_1}(i_1)(z,\zeta)\colon \mathbb{C}  \longrightarrow H(\mathbb{R}^{\nu_1}_y),
\quad q \longmapsto q\,\delta(y),
\quad  (z,\zeta) \in T^*_0X_1,
\end{equation}
and
\begin{equation}\notag
\sigma_{X_1 \cap X_2}(i^1)(z,\zeta)\colon H(\mathbb{R}_{x}^{n_1}) \longrightarrow H(\mathbb{R}_{x,y}^{\nu_3}),
\quad u(x) \longmapsto u(x) \otimes \delta(y),
\end{equation}
where $(z,\zeta) \in T^*_0(X_1 \cap X_2)$.

The symbols $\sigma_{X_2}(i^2)$ and $\sigma_{X_1 \cap X_2}(i^2)$ of $i^2$ are defined similarly.

\subsection{Symbols of general morphisms}

Let $Z$ be any of the strata $X_0$, $X_1$, $X_2$, $X_1 \cap X_2$.
\begin{definition}\label{def:morphismSmb}
The composition
\begin{equation}\label{defeq:simpleMorElementSmb}
\sigma_Z(\mathcal{D}_{kl}) = \sigma_Z(D_{k,i_1}) \, \sigma_Z(D_{i_1,i_2}) \, \ldots \, \sigma_Z(D_{i_N,l}),
\end{equation}
is called the \textit{symbol} $\sigma_Z(\mathcal{D}_{kl})$ of the morphism~\eqref{defeq:simpleMorElement} on $Z$.
Note that all the terms $\sigma_Z$ on the right hand side were defined above.
\end{definition}

Therefore, the following symbols are defined for a general morphism~\eqref{eq:generalMorphism}.
\begin{enumerate}
\item The symbol on $X_0$ is equal to the  symbol of the $\psi$DO component $D_0$:
\begin{equation}\notag
\sigma_{X_0}(\mathcal{D})(z,\zeta) = \sigma(D_0)(z,\zeta)\colon \mathbb{C} \longrightarrow \mathbb{C},
\quad (z,\zeta) \in T^*_0X_0.
\end{equation}

\item The symbol on $X_1$ is the operator-function
\begin{equation}\notag
\sigma_{X_1}(\mathcal{D})(z,\zeta) =
\left(\rule{0ex}{4ex}
    \begin{array}{cc}
        \sigma_{X_1}(D_0 + G_1) & \sigma_{X_1}(C_1) \vspace{1ex} \\
        \sigma_{X_1}(B_1) & \sigma(D_1) \vspace{1ex} \\
    \end{array}
\right)(z,\zeta), \quad (z,\zeta) \in T^*_0X_1 
\end{equation}
ranging in operators acting in the spaces
\begin{equation}\notag
\sigma_{X_1}(\mathcal{D})(z,\zeta) \colon
\begin{array}{c}
    H(\mathbb{R}^{\nu_1}) \\
    \oplus \\
    \mathbb{C}
\end{array}
\longrightarrow
\begin{array}{c}
    H(\mathbb{R}^{\nu_1}) \\
    \oplus \\
    \mathbb{C}
\end{array}.
\end{equation}

\item The symbol on $X_2$ is the operator-function
\begin{equation}\notag
\sigma_{X_2}(\mathcal{D})(z,\zeta) =
\left(\rule{0ex}{4ex}
    \begin{array}{cc}
        \sigma_{X_2}(D_0 + G_2) & \sigma_{X_2}(C_2) \vspace{1ex}\\
        \sigma_{X_2}(B_2) & \sigma(D_2) \vspace{1ex}\\
    \end{array}
\right)(z,\zeta),
\quad (z,\zeta) \in T^*_0X_2,
\end{equation}
with values in operators acting in the spaces
\begin{equation}\notag
\sigma_{X_2}(\mathcal{D})(z,\zeta) \colon
\begin{array}{c}
    H(\mathbb{R}^{\nu_2}) \\
    \oplus \\
    \mathbb{C}
\end{array}
\longrightarrow
\begin{array}{c}
    H(\mathbb{R}^{\nu_2}) \\
    \oplus \\
    \mathbb{C}
\end{array}.
\end{equation}

\item Finally, the symbol on the intersection $Z = X_1 \cap X_2$ is the operator-function
\begin{multline}\notag
\sigma_Z(\mathcal{D})(z,\zeta) = \\ =
\left(\rule{0ex}{6ex}
    \begin{array}{ccc}
        \sigma_Z(D_0 + G_0 + G_1 + M_0) & \sigma_Z(C_1 + C_1^\prime) & \sigma_Z(C_2 + C_2^\prime) \vspace{1ex} \\
        \sigma_Z(B_1 + B_1^\prime) & \sigma_Z(D_1 + M_1) & \sigma_Z(T_{12}) \vspace{1ex} \\
        \sigma_Z(B_2 + B_2^\prime) & \sigma_Z(T_{21}) & \sigma_Z(D_2 + M_2) \\
    \end{array}
\right)(z,\zeta),
\end{multline}
where $(z,\zeta) \in T_0^*Z$, with values in operators acting in the spaces
\begin{equation}\notag
\sigma_Z(\mathcal{D})(z,\zeta)\colon
\begin{array}{c}
    H(\mathbb{R}^{\nu_3}) \\
    \oplus \\
    H(\mathbb{R}^{n_1}) \\
    \oplus \\
    H(\mathbb{R}^{n_2})
\end{array}
\longrightarrow
\begin{array}{c}
    H(\mathbb{R}^{\nu_3}) \\
    \oplus \\
    H(\mathbb{R}^{n_1}) \\
    \oplus \\
    H(\mathbb{R}^{n_2})
\end{array}.
\end{equation}
\end{enumerate}

\subsection{The composition formula}

In this section we show that  symbols of   morphisms are  well defined  and establish the corresponding
composition formula. This is the main result of the present paper.
\begin{theorem}\label{th:main}
For any morphism $\mathcal{D} \in \operatorname{Mor}$, its symbol $\sigma_Z(\mathcal{D})$ (see Definition~\ref{def:morphismSmb})
on any stratum $Z$ does not depend on the choice of representation of $\mathcal{D}$
in terms of generators~\eqref{eq:pdo},~\eqref{eq:b}, and~\eqref{eq:cob} of the algebra $\operatorname{Mor}$.
Moreover, for any two morphisms $\mathcal{D}_1, \mathcal{D}_1 \in \operatorname{Mor}$ the following composition formula holds
\begin{equation}\label{eq:comp1}
\sigma_Z(\mathcal{D}_1 \mathcal{D}_2) = \sigma_Z(\mathcal{D}_1) \, \sigma_Z(\mathcal{D}_2).
\end{equation}
\end{theorem}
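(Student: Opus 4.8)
\medskip
\noindent The plan is to first reduce the composition formula~\eqref{eq:comp1} to the well-definedness assertion. Every morphism is, by construction, a finite sum of words~\eqref{defeq:simpleMorElement} in the generators~\eqref{eq:pdo}, \eqref{eq:b}, \eqref{eq:cob}; the composition of two words is their concatenation; and, by Definition~\ref{def:morphismSmb}, the symbol on a stratum $Z$ of a word is the product of the symbols of its letters, all of which were defined above. Hence, as soon as $\sigma_Z$ is known to depend only on the operator and not on the word representing it, writing $\mathcal{D}_1$ and $\mathcal{D}_2$ as sums of words and concatenating immediately gives $\sigma_Z(\mathcal{D}_1\mathcal{D}_2)=\sigma_Z(\mathcal{D}_1)\,\sigma_Z(\mathcal{D}_2)$. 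So the whole content of the theorem is the well-definedness of $\sigma_Z$, and, by additivity, it suffices to establish it for a single word.

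To this end I would attach to an \emph{arbitrary} morphism $\mathcal{D}$ a model operator $\widetilde\sigma_Z(\mathcal{D})$ at $Z$ that is manifestly determined by $\mathcal{D}$ as an operator. Since here $X_0=\mathbb{R}^n$ and $X_1,X_2$ are coordinate subspaces, I may use global linear coordinates $x=(y,z)$ with $Z=\{y=0\}$ and the induced splittings on the submanifolds meeting $Z$. Replacing $\mathcal{D}$ by $\varphi\mathcal{D}\varphi$ with $\varphi\in C^\infty_c(X_0)$ equal to $1$ near $Z$ changes $\mathcal{D}$ only by an operator of order $<\operatorname{ord}\mathcal{D}$ supported away from $Z$ (by the notion of localization), hence is harmless. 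I would then define $\widetilde\sigma_Z(\mathcal{D})(z_0,\zeta_0)$, for $(z_0,\zeta_0)\in T^*_0Z$, by a rescaling that generalizes the classical characterization of the principal symbol: test $\mathcal{D}$ on suitably normalized families of functions that oscillate like $e^{i\langle z,\zeta_0\rangle/t}$ along $Z$ and are rescaled by $t$ in the normal variables $y$, and pass to the limit $t\to 0$ in the operator topology of the spaces appearing in the definition of $\sigma_Z$; equivalently, freeze the $z$-dependence at $z_0$ and keep the normal variables $y$ quantized. For a $\psi$DO this reproduces~\eqref{defeq:AZSmb}, and for the elementary boundary and coboundary operators $i^k,i_k$ — which are already translation-invariant along the relevant tangential directions and homogeneous of degree zero thanks to the order normalization in~\eqref{eq:b}, \eqref{eq:cob} — it reproduces the operator-functions defined above; the existence of the limit for a general morphism comes out of the next step.

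The heart of the argument, and the step I expect to be the main obstacle, is the multiplicativity of the model operator: $\widetilde\sigma_Z(\mathcal{D}'\mathcal{D}'')=\widetilde\sigma_Z(\mathcal{D}')\,\widetilde\sigma_Z(\mathcal{D}'')$ whenever the composition is defined. I would prove this directly at the level of Schwartz kernels, by showing that the leading part at $Z$ of a product of morphisms is the product of their leading parts. Thanks to the classification of morphisms into finitely many types carried out above, this analysis splits into finitely many cases; its only non-formal ingredients are the multiplicativity of the principal symbol in the $\psi$DO calculus and the known fact (see~\cite{Ster1,NoSt1}) that $i^k A_0 i_k$ is a $\psi$DO on $X_k$, with the symbol identity $\sigma_Z(i^k A_0 i_k)=\sigma_Z(i^k)\,\sigma_Z(A_0)\,\sigma_Z(i_k)$ on every stratum $Z$. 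The genuinely analytic point is that the limit defining $\widetilde\sigma_Z$ commutes with composition: this requires operator-norm bounds on the rescaled operators that are uniform in the scaling parameter $t$ and locally uniform in $(z_0,\zeta_0)$, so that the limit of a product of rescaled families equals the product of their limits. Such bounds follow from the explicit singularity structure of the Schwartz kernels of the generators near the four strata $X_0$, $X_1$, $X_2$, $X_1\cap X_2$, and establishing them uniformly is where the real work lies.

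Finally I would conclude as follows. If $w=g_1\cdots g_r$ is a word in the generators representing a simple morphism $\mathcal{D}$, then the multiplicativity just proved, combined with the identification of $\widetilde\sigma_Z$ with $\sigma_Z$ on the generators, gives
\begin{equation}\notag
\widetilde\sigma_Z(\mathcal{D})=\widetilde\sigma_Z(g_1)\cdots\widetilde\sigma_Z(g_r)=\sigma_Z(g_1)\cdots\sigma_Z(g_r)=\sigma_Z(\mathcal{D}),
\end{equation}
the last equality being Definition~\ref{def:morphismSmb}; extending additively over simple morphisms, $\widetilde\sigma_Z(\mathcal{D})=\sigma_Z(\mathcal{D})$ for every $\mathcal{D}\in\operatorname{Mor}$. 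Since $\widetilde\sigma_Z(\mathcal{D})$ was built directly from $\mathcal{D}$, it does not depend on the word chosen to represent $\mathcal{D}$, which is the first assertion of the theorem; and then $\sigma_Z(\mathcal{D}_1\mathcal{D}_2)=\widetilde\sigma_Z(\mathcal{D}_1\mathcal{D}_2)=\widetilde\sigma_Z(\mathcal{D}_1)\,\widetilde\sigma_Z(\mathcal{D}_2)=\sigma_Z(\mathcal{D}_1)\,\sigma_Z(\mathcal{D}_2)$, which is~\eqref{eq:comp1}.
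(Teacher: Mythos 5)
Your proposal is correct and follows essentially the same route as the paper: reduce the theorem to well-definedness of $\sigma_Z$, then recover the symbol intrinsically from the operator by conjugating with a family of unitary rescaling/oscillation operators localized at $(z_0,\zeta_0)\in T^*_0Z$ and passing to the limit — this is exactly the Rempel--Schulze family $R_{\lambda,z,y}$ and the testing Lemma of the paper, with your parameter $t$ playing the role of $1/\lambda$. Like the paper, you correctly identify that the remaining work is the verification of the limit (reduced by linearity and multiplicativity to the elementary generators), which the paper also only sketches with a reference to Rempel--Schulze.
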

\begin{proof}
Note that~\eqref{eq:comp1} readily follows from Definition~\ref{def:morphismSmb}.
Thus, we only have to prove that the symbol is well-defined
(that is, it does not depend on the choice of representation of a morphism
in terms of generators).

1. Define the following order reduction operators
\begin{equation}\notag
(\Lambda_0,\Lambda_1,\Lambda_2)\colon
\bigoplus_{k=0,1,2} H^{s_k}(X_k)\longrightarrow \bigoplus_{k=0,1,2} L^2(X_k),
\end{equation}
where $\Lambda_k$ is an elliptic $\psi$DO on $X_k$ of order $s_k$.
Now, we can reduce any morphism $\mathcal{D} \in \operatorname{Mor}$ to an operator acting in $L^2$-spaces
by multiplying it by appropriate powers of order reduction operators.
Then it suffices to prove the   theorem for the resulting operators;
the general case easily follows.

Thus, we assume that $\mathcal{D}\in\operatorname{Mor}$ acts in the spaces
\begin{equation}\label{eq:mor0}
\mathcal{D}\colon \bigoplus_{k=0,1,2} L^2(X_k)\longrightarrow \bigoplus_{k=0,1,2} L^2(X_k).
\end{equation}

2. Now, we show that symbols of the morphism~\eqref{eq:mor0} are well defined.
We note that our approach is based on the ideas described in~\cite{Hor7}
(for a smooth manifold without boundary)
and in~\cite{ReSc1}
(for boundary value problems).

First, we introduce an auxiliary operator family.
Namely, consider the space $\mathbb{R}^{k+\nu}$ with coordinates $(z,y)$.
Given a point $(z_0,\zeta_0) \in T_0^*\mathbb{R}^k$, define the operator family (cf.~\cite{ReSc1})
\begin{equation}\label{eq:RSchOperatorFamily}
R_{\lambda,z,y}\colon
L^2(\mathbb{R}^{k+\nu}_{z,y})
\longrightarrow
L^2(\mathbb{R}^{k+\nu}_{z,y}),\quad \lambda>0,
\end{equation}
where
\begin{equation}\notag
R_{\lambda,z,y}\colon
u(z,y)
\longmapsto
\lambda^{k/4+\nu/2} \, e^{i\lambda z \zeta_0} \, u\bigl(\lambda^{1/2}(z-z_0),\lambda y\bigr).
\end{equation}
A straightforward computation shows that the operators~\eqref{eq:RSchOperatorFamily}
are unitary, and for any $u\in C^\infty_c(\mathbb{R}^{k+\nu})$ the sequence of functions
$R_{\lambda,z,y}u$ tends to $0$ weakly in $L^2(\mathbb{R}^{k+\nu}_{z,y})$ as $\lambda\to\infty$.

Let us return to the morphism~\eqref{eq:mor0}. Consider its component
$$
\mathcal{D}_{kl}\colon L^2 (X_l) \longrightarrow L^2(X_k) 
$$
and its symbol $\sigma_Z(\mathcal{D}_{kl})$ on some stratum  $Z \subset X_1 \cup X_2$.
Choose the coordinates $(z,y) \in \mathbb{R}^{n+\nu} = X_k$, $(z,y') \in \mathbb{R}^{n+\nu'} = X_l$,
such that $Z$ is defined by the equations $Z = \{(z,0)\}$ in $X_k$ and in $X_l$.
Here $\nu$ stands for the codimension of $Z$ in $X_k$,
and $\nu'$ is the codimension of $Z$ in $X_l$.

The following lemma implies that the symbol $\sigma_Z(D_{kl})$ is well defined.
\begin{lemma}\label{lem:testing}
For any two functions
$u \in C^\infty_c(\mathbb{R}^n_z)$, $v \in C^\infty_c(\mathbb{R}^{\nu'}_{y'})$,
and any point $(z_0,\zeta_0) \in T^*_0Z$ the following equality holds
\begin{equation}\label{eq:ReSchDifference}
\lim_{\lambda\to\infty }\bigl\|R_{\lambda,z,y}^{-1} \, \mathcal{D}_{kl} \, R_{\lambda,z,y'} (u \otimes v)
- u \otimes \bigl[\sigma_{Z}(\mathcal{D}_{kl})(z_0,\zeta_0)\bigr]v\bigr\|_{L^2(X_k)}=0.
\end{equation}
\end{lemma}
\begin{proof}
By linearity and multiplicativity of the expression under the norm sign in~\eqref{eq:ReSchDifference}, it suffices to prove that the limit is equal to zero in the following
three special cases:
\begin{enumerate}
\item $k=l$, and $\mathcal{D}_{kk}$ is a $\psi$DO of order zero;
\item $l=0$, $k>0$, and
\begin{equation}\notag
\mathcal{D}_{k0} = \Lambda_k \, i^k \, \Lambda_0
\colon L^2(X_0)\longrightarrow L^2(X_k),
\end{equation}
where $\Lambda_0$ and  $\Lambda_k$ are order reduction operators on $X_0$ and $X_k$ respectively.
\item  $k=0$, $l>0$, and
\begin{equation}\notag
\mathcal{D}_{0l}= \Lambda_0 \, i_l \, \Lambda_l
\colon L^2(X_l)\longrightarrow L^2(X_0),
\end{equation}
where $\Lambda_0$ and  $\Lambda_l$ are order reduction operators on $X_0$ and  $X_l$  respectively.
\end{enumerate}

For brevity, we omit the details of the corresponding verification
and refer the reader to~\cite{ReSc1} where a  similar calculation is carried out.
\end{proof}

Now, it follows from Lemma~\ref{lem:testing} that the symbol of~\eqref{eq:mor0}
is well defined. In turn, this implies that  symbols of   general morphisms are well defined.

The proof of Theorem~\ref{th:main} is now complete.
\end{proof}

  

\end{document}